\newtheorem{theorem}{Theorem}[section]
\newtheorem{proposition}[theorem]{Proposition}
\newtheorem{remark}[theorem]{Remark}
\newtheorem{cor}[theorem]{Corollary}
\theoremstyle{definition}
\newtheorem{definition}[theorem]{Definition}
\newtheorem{example}[theorem]{Example}
\newtheorem{claim}{\bf Claim}
\numberwithin{equation}{section}
\newcommand\nn{\mathbb{N}}
\newcommand\qq{\mathbb{Q}}
\newcommand\rr{\mathbb{R}}
\begin{document}%%%%%%%%%%%%%%%%%%%%%%%%%%%%%%%%%%%%%%%
	
	\mbox{}
	\title{Approximating Length-Based Invariants in Atomic Puiseux Monoids}
	\keywords{atomic Puiseux monoids, numerical monoids, approximation, factorization invariants, sets of lengths, elasticity, set of distances}
	\author{Harold Polo}
	\subjclass[2010]{Primary: 20M13; Secondary: 40A05, 20M14}
	\address{Mathematics Department\\University of Florida\\Gainesville, FL 32611}
	\email{haroldpolo@ufl.edu}
	\date{\today}
	
	\begin{abstract}
		 A numerical monoid is a cofinite additive submonoid of the nonnegative integers, while a Puiseux monoid is an additive submonoid of the nonnegative cone of the rational numbers. Using that a Puiseux monoid is an increasing union of copies of numerical monoids, we prove that some of the factorization invariants of these two classes of monoids are related through a limiting process. This allows us to extend results from numerical to Puiseux monoids. We illustrate the versatility of this technique by recovering various known results about Puiseux monoids.
	\end{abstract}
	
	\maketitle
	
	%%%%%%%%%%%%%%%%%%%%%%%%%%%%%%%%%%%%%%%%%%%%%%%%%%%%%%
	\section{Introduction} \label{sec:intro}%%%%%%%%%%%%%%%%%%%%%%%%%% \label{sec:intro}%%%%%%%%%%%%%%%%%%%%%%%%%%%%%%%%%%%%%%%%%%%
	
	A monoid $M$ is atomic provided that every nonunit element can be represented as a product of finitely many irreducibles. If for each nonunit element of $M$ such a representation is unique, up to permutation, then $M$ is called a unique factorization monoid (UFM). For example, the positive integers with the standard product is a UFM by the Fundamental Theorem of Arithmetic. Factorization theory studies how far is an atomic monoid from being a UFM, and several algebraic invariants has been introduced to quantify this deviation (see \cite{AGQZ} and references therein).
	
	Numerical monoids, that is, cofinite additive submonoids of the nonnegative integers, have been significantly investigated in the context of factorization theory; much of the recent literature has focused on the computational aspects of their factorization invariants (see, for example, \cite{TBCORP2017}). Since numerical monoids are finitely generated, calculating factorization invariants in this setting is highly tractable~\cite{GS2016}. This motivated the implementation of a GAP~\cite{GAP} package, \texttt{numericalsgps}~\cite{MDGSJM}, to assist researchers in the area. Thus, numerical monoids constitute an ideal framework to study factorization invariants.
	
	Additive submonoids of the nonnegative cone of $\qq$, also called Puiseux monoids, are natural generalizations of numerical monoids. A systematic investigation of these monoids started just a few years ago in \cite{GOTTI16} and, consequently, we do not know much about their factorization invariants. The crux of this article is to study the set of lengths (and related factorization invariants) of Puiseux monoids through their representation as increasing unions of copies of numerical monoids.
	
	%%%%%%%%%%%%%%%%%%%%%%%%%%%%%%%%%%%%%%%%%%%%%%%%
	\section{Preliminary} \label{sec:definition of GL}%%%%%%%%%%%%%%%%%
	%%%%%%%%%%%%%%%%%%%%%%%%%%%%%%%%%%%%%%%%%%%%%%%%
	In this section, we introduce the concepts and notation necessary to follow our exposition. General references for factorization theory can be found in \cite{AGHK06}.
	
	Throughout this article, we let $\nn$ and $\nn_0$ denote the set of positive and nonnegative integers, respectively, while we denote by $\overline{\rr}$ the set $\rr \cup \{\infty\}$. For nonnegative integers $m$ and $n$, let $\llbracket m, n \rrbracket$ be the set of integers between $m$ and $n$, i.e.,
	\[
		\llbracket m,n \rrbracket \coloneqq \{k \in\nn_0 \mid m \leq k \leq n\}.
	\]
	Given a subset $S$ of the rational numbers, we let $S_{\geq t}$ denote the set of nonnegative elements of $S$ that are greater than or equal to $t$. In the same way we define $S_{>t}$ and $S_{<t}$. For a positive rational number $q$, the relatively prime positive integers $n$ and $d$ for which $q = n/d$ are denoted by $\mathsf{n}(q)$ and $\mathsf{d}(q)$, respectively. 
	
	A monoid $M$ is \emph{reduced} if the only invertible element of $M$ is the identity. From now on we assume that all monoids here are commutative, cancellative, and reduced. Let $M$ be a monoid, which is written additively, and set $M^{\bullet} \coloneqq M\setminus\{0\}$. An element $x \in M^{\bullet}$ is an \emph{atom} provided that $x$ cannot be expressed as the sum of two elements of $M^{\bullet}$. We let $\mathcal{A}(M)$ represent the set of atoms of $M$. In addition, we say that an atom $a' \in \mathcal{A}(M)$ is \emph{stable} if the set $\{a \in \mathcal{A}(M) \mid \mathsf{n}(a) = \mathsf{n}(a')\}$ has infinite cardinality. Now for a subset $S \subseteq M$, we denote by $\langle S \rangle$ the minimal submonoid of $M$ including $S$, and if $M = \langle S \rangle$ then it is said that $S$ is a \emph{generating set} of $M$. The monoid $M$ is \emph{atomic} with the proviso that $M = \langle\mathcal{A}(M)\rangle$. 
	
	\begin{definition}
		A \emph{numerical monoid} is an additive submonoid of $\nn_0$ whose complement in $\nn_0$ is finite.
	\end{definition}

	Numerical monoids are finitely generated and, therefore, atomic with finitely many atoms. Moreover, it is well known that given a subset $S$ of $\nn$, the submonoid $\langle S \rangle$ of $\nn_0$ is a numerical monoid if and only if $\gcd(S) = 1$. For an introduction to numerical monoids and for their many applications, we refer the reader to \cite{GSJCR2009} and \cite{AAPAGS2016}, respectively.
	
	\begin{definition} 
		A \emph{Puiseux monoid} is an additive submonoid of $\qq_{\geq 0}$.
	\end{definition}
	
	Puiseux monoids are natural generalizations of numerical monoids. However, Puiseux monoids have a complex atomic structure: while some of them have no atoms at all (e.g., $\langle 1/2^n \mid n \in\nn_0 \rangle$), some others have a dense set of atoms in a real interval (e.g., $\langle [1,2) \cap \qq\rangle$). Unlike numerical monoids, Puiseux monoids are not necessarily finitely generated. Readers can find a survey about the atomic properties of Puiseux monoids in \cite{CGG2019}.
	
	The \emph{factorization monoid} of $M$, denoted by $\mathsf{Z}(M)$, is the free commutative monoid on $\mathcal{A}(M)$. The elements of $\mathsf{Z}(M)$ are called \emph{factorizations}, and if $z = a_1 + \cdots + a_n$ is an element of $\mathsf{Z}(M)$ for $a_1, \ldots, a_n \in\mathcal{A}(M)$ then it is said that $|z| \coloneqq n$ is the \emph{length} of $z$. The unique monoid homomorphism $\pi\colon\mathsf{Z}(M) \to M$ satisfying that $\pi(a) = a$ for all $a \in\mathcal{A}(M)$ is called the \emph{factorization homomorphism} of $M$. For all $x \in M$, there are two important sets associated with $x$:
	\[
	\mathsf{Z}_M(x) \coloneqq \pi^{-1}(x) \subseteq \mathsf{Z}(M) \hspace{0.6 cm}\text{ and } \hspace{0.6 cm}\mathsf{L}_M(x) \coloneqq \{|z| : z \in\mathsf{Z}_M(x)\},
	\]
	which are called the \emph{set of factorizations} of $x$ and the \emph{set of lengths} of $x$, respectively; we omit subscripts when $M$ is clear from the context. In addition, the collection $\mathcal{L}(M) \coloneqq \{\mathsf{L}(x) \mid x \in M\}$ is called the \emph{system of sets of lengths} of $M$. The system of sets of lengths of Puiseux monoids was first studied in \cite{GOTTI2019L}. See~\cite{AG2016} for a survey about sets of lengths and the role they play in factorization theory. 
	
	We now introduce unions of sets of lengths and local elasticities. The elasticity of a monoid $M$ is an invariant introduced by Valenza~\cite{Valenza} in the context of algebraic number theory, and it is defined by $\rho(M) \coloneqq \sup\{\rho_M(x) \mid x \in M\}$, where $\rho_M(0) \coloneqq 1$ and $\rho_M(x) \coloneqq \sup \mathsf{L}_M(x)/\inf\mathsf{L}_M(x)$ if $x \neq 0$. The monoid $M$ has \emph{accepted} elasticity provided that there exists $x \in M$ such that $\rho(x) = \rho(M)$. The elasticity of Puiseux monoids has been studied in \cite{MGOTTI2019,GoOn2017}. Now for a positive integer $n$, we denote by $\,\mathcal{U}_n(M)$ the set of positive integers $m$ for which there exist $a_1, \ldots, a_n, a'_1, \ldots, a'_m \in \mathcal{A}(M)$ such that $a _1 + \cdots + a_n = a'_1 + \cdots + a'_m$. It is said that $\,\mathcal{U}_n(M)$ is the \emph{union of sets of lengths} of $M$ containing $n$. It is also said that $\rho_n(M) \coloneqq \sup \,\mathcal{U}_n(M)$ is the \emph{nth local elasticity} of $M$. Unions of sets of lengths were introduced in~\cite{ChWWS1990}.
	
	A factorization invariant that is closely related to the set of lengths is the \emph{set of distances} or \emph{delta set}. For a nonzero element $x \in M$ it is said that $d \in \nn$ is a \emph{distance} of $x$ on condition that $\mathsf{L}_M(x) \cap [l, l + d] = \{l, l + d\}$ for some $l \in \mathsf{L}_M(x)$. The \emph{set of distances of $x$}, denoted by $\Delta_M(x)$, is the set consisting of all the distances of $x$. In addition, the set	
	\[	
	\Delta(M) \coloneqq \bigcup_{x \in M} \Delta_M(x)
	\]   
	is called the \emph{set of distances} of $M$. Although the set of distances of numerical monoids has received some attention lately (see, for instance, \cite{BCKR06,CKDH2009}), the set of distances of Puiseux monoids does not seem to be investigated yet.
	
%	Recently, Chapman et al.~\cite{ChOnPo2020} introduced a new factorization invariant that provides information about the entire set of lengths. Let $M$ be a BFM that is not an HFM, and let $x \in M^{\bullet}$. If $|\mathsf{L}(x)| \neq 1$ then we set 	
%	\[
%		\LD_M(x) \coloneqq \frac{|\mathsf{L}(x)| - 1}{\max \mathsf{L}(x) - \min\mathsf{L}(x)}.
%	\]
%	Additionally, we set $\LD(M) \coloneqq \inf\{\LD_M(x) \mid x \in M^{\bullet}, \,|\mathsf{L}(x)| \neq 1\}$. 
	
	\section{Set of Lengths and Elasticity}
	
	An atomic Puiseux monoid $M$ can be represented as an increasing union of copies of numerical monoids: the monoid $M$ contains a minimal set of generators, namely $\mathcal{A}(M)$, by \cite[Proposition 1.1.7]{AGHK06}. Consequently, given an ordering $a_1, a_2, \ldots$ of the elements of $\mathcal{A}(M)$, we have the sequence $(N_i)_{i \geq 1}$ with $N_i = \langle a_1, \ldots, a_i \rangle$ for all $i \in \nn$. Clearly, $M = \bigcup_{i \geq 1} N_i$ and $N_i$ is isomorphic to a numerical monoid for each $i \in \nn$ by \cite[Theorem 4.2]{GOTTI2018}. This representation has been used to manufacture Puiseux monoids satisfying certain properties. Consider the following examples.
	
	\begin{example} \label{ex: bifurcus Puiseux monoid}
		In \cite[Section 6]{GoOn2017} the authors constructed a bifurcus Puiseux monoid, that is, a Puiseux monoid $M$ satisfying that $2\in\mathsf{L}(x)$ for all $x \in M^{\bullet}\setminus\mathcal{A}(M)$. To achieve this, take a collection of prime numbers $\{p_{j,n} \mid j,\!n \geq 1\}$ such that $p_{j,n} \geq \max(13, 2^j)$ for all $j,n \in\nn$ and, recursively, define an increasing sequence of finitely generated Puiseux monoids in the following manner: take $N_0 = \langle 1/2, 1/3 \rangle$, and assuming that $N_{j - 1}$ was already defined for some $j \in \nn$, let $x_{j,1}, x_{j,2}, \ldots$ be the elements of $N_{j - 1}$ with no length $2$ factorization. Then take 
		\[
		N_j = N_{j - 1} + \left\langle\frac{x_{j,n}}{2} - \frac{1}{p_{j,n}}, \,\frac{x_{j,n}}{2} + \frac{1}{p_{j,n}} \,\,\bigg| \,\,n \geq 1 \right\rangle\!.
		\]
		Observe that $N_j$ provides a length $2$ factorization for the elements of $N_{j - 1}$ that did not have one before. Now take $M = \bigcup_{i \geq 0} N_i$. The monoid $M$ is bifurcus; the reader can check the details of the proof in \cite[Theorem 6.2]{GoOn2017}. One of the key features of this construction is that $\mathcal{A}(N_i) \subseteq \mathcal{A}(N_{i + 1})$ for every $i \in\nn_0$.
	\end{example}
	
	\begin{example} \label{ex: infinitely many Puiseux monoids with no finite local elasticities}
		In \cite{MGOTTI2019} the author proved that there exists a Puiseux monoid without $0$ as a limit point that has no finite local elasticities. With this purpose, she pieces together a Puiseux monoid $M$ by creating a strictly increasing sequence of finite subsets of positive rationals $(A_i)_{i \geq 1}$ satisfying the following three conditions:
		\begin{itemize}
			\item $\mathsf{d}(A_i)$ consists of odd prime numbers,
			\item $\mathsf{d}(\max A_i) = \max \mathsf{d}(A_i)$, and
			\item $A_i$ minimally generates the Puiseux monoid $N_i = \langle A_i \rangle$. 
		\end{itemize}
		Then the author takes $M \!= \!\bigcup_{i \geq 1} N_i$, where $\mathcal{A}(N_i) \subseteq \mathcal{A}(N_{i + 1}) \subseteq \mathcal{A}(M)$ and prove that $(\rho_2(N_i))_{i \geq 1}$ is an increasing sequence that does not stabilize. Since $\mathcal{A}(N_i)\subseteq\mathcal{A}(M)$ for each $i \in\nn$, it follows that $\rho_2(M) = \infty$. For details see \cite[Proposition 3.6]{MGOTTI2019}.
	\end{example}

	This representation of Puiseux monoids can help us not only to provide sophisticated examples but also to study some factorization invariants in these monoids. 

	\begin{definition} \label{def: numerical approximation}
		Let $(M_i)_{i \geq 1}$ be an increasing sequence of atomic Puiseux monoids. We say that $(M_i)_{i \geq 1}$ is an \emph{approximation} of the Puiseux monoid $M = \bigcup_{i \geq 1} M_i$ provided that $\mathcal{A}(M_i) \subseteq \mathcal{A}(M_{i + 1})$ for each $i \in\nn$. If $M_i$ is finitely generated for every $i \in\nn$ then we call $(M_i)_{i \geq 1}$ a \emph{numerical approximation} of $M$.
	\end{definition}
	
	\begin{remark} \label{rem: atoms of a PM are the increasing union of the atoms of the numerical semigroups forming a numerical approximation}
		\textup{Given an approximation $(M_i)_{i \geq 1}$ of a Puiseux monoid $M$, it is not hard to see that $M$ is atomic	 with $\mathcal{A}(M) = \bigcup_{i \geq 1} \mathcal{A}(M_i)$.} 
	\end{remark}

	We prove that, given an approximation of a Puiseux monoid, we can compute its sets of lengths and related factorization invariants by ``passing to the limit" in a sense that will become clear soon. Using this approach we can provide alternative proofs to some known results about the sets of lengths of Puiseux monoids.
	\begin{theorem} \label{theorem: main idea}
		Let $M$ be a Puiseux monoid with an approximation $(M_i)_{i \geq 1}$, and let $x$ be an element of $M$. Then, for some $j \in \nn$, the following statements hold:
		\begin{enumerate}
			\item $\mathsf{Z}_M(x) = \bigcup_{i \geq j} \mathsf{Z}_{M_i}(x)$ and $\mathsf{Z}(M) = \bigcup_{i \geq 1}\mathsf{Z}(M_i)$.\vspace{1pt}
			\item $\mathsf{L}_M(x) = \bigcup_{i \geq j}\mathsf{L}_{M_i}(x)$.\vspace{1pt}
			\item $\rho_M(x) = \lim_{i} \rho_{M_{i + j}}(x)$ and $\rho(M) = \lim_{i} \rho(M_i)$.\vspace{1pt}
			\item $\rho_m(M) = \lim_{i} \rho_m(M_i)$ for each $m \in \nn$.
		\end{enumerate}
	\end{theorem}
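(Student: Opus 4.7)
My approach is to deduce all four assertions from one structural observation: the approximation hypothesis $\mathcal{A}(M_i) \subseteq \mathcal{A}(M_{i+1})$, together with the remark above giving $\mathcal{A}(M) = \bigcup_{i \geq 1} \mathcal{A}(M_i)$, implies that every atom of $M_i$ is an atom of $M_{i+1}$ and of $M$. Hence a factorization in $\mathsf{Z}(M_i)$ remains a valid factorization in $\mathsf{Z}(M_{i+1})$ and in $\mathsf{Z}(M)$, producing the ascending chains
\[
\mathsf{Z}_{M_1}(x) \subseteq \mathsf{Z}_{M_2}(x) \subseteq \cdots \subseteq \mathsf{Z}_M(x) \qquad \text{and} \qquad \mathsf{L}_{M_1}(x) \subseteq \mathsf{L}_{M_2}(x) \subseteq \cdots \subseteq \mathsf{L}_M(x),
\]
with the early sets possibly empty until $x$ first appears in some $M_j$.

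For part (1), I would choose $j$ minimal with $x \in M_j$. Given any $z \in \mathsf{Z}_M(x)$, only finitely many atoms of $M$ occur in $z$, and since $(\mathcal{A}(M_i))_{i \geq 1}$ is increasing with union $\mathcal{A}(M)$, all such atoms lie in some $\mathcal{A}(M_k)$ with $k \geq j$; thus $z \in \mathsf{Z}_{M_k}(x)$. The reverse inclusion is immediate from the chain above. Taking the union over $x \in M$ then yields the global identity $\mathsf{Z}(M) = \bigcup_{i \geq 1} \mathsf{Z}(M_i)$. Part (2) follows at once by applying the length function to both sides of the first identity in (1).

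For part (3), the sets $\mathsf{L}_{M_{i+j}}(x)$ form an ascending chain of subsets of $\nn_0$ with union $\mathsf{L}_M(x)$. Consequently $\sup \mathsf{L}_{M_{i+j}}(x)$ increases to $\sup \mathsf{L}_M(x)$ (possibly $\infty$), while $\inf \mathsf{L}_{M_{i+j}}(x)$ is a weakly decreasing sequence of positive integers, hence eventually equal to $\inf \mathsf{L}_M(x)$. Dividing yields $\rho_M(x) = \lim_i \rho_{M_{i+j}}(x)$, with $x = 0$ handled by the convention $\rho(0) = 1$. For the global elasticity, I would first verify that $(\rho(M_i))_{i \geq 1}$ is non-decreasing (since $\mathsf{L}_{M_i}(y) \subseteq \mathsf{L}_{M_{i+1}}(y)$ for each $y \in M_i$) and then establish $\rho(M) = \sup_i \rho(M_i)$ by checking both inequalities: the bound $\rho(M) \leq \sup_i \rho(M_i)$ uses the local identity at each $x$, while $\rho(M) \geq \rho(M_i)$ follows from $\rho_{M_i}(y) \leq \rho_M(y)$ for every $y \in M_i \subseteq M$.

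Part (4) proceeds by the same template: I would prove the identity $\mathcal{U}_m(M) = \bigcup_{i \geq 1} \mathcal{U}_m(M_i)$ by arguing that any $l \in \mathcal{U}_m(M)$ is witnessed by finitely many atoms that jointly lie in some $\mathcal{A}(M_k)$ (so $l \in \mathcal{U}_m(M_k)$), while the reverse containment again uses that atoms of $M_i$ are atoms of $M$. Since this union is monotone, $\rho_m(M) = \sup \mathcal{U}_m(M) = \lim_i \rho_m(M_i)$. The only real obstacle is consistent bookkeeping when suprema take the value $\infty$ (working in $\overline{\rr}$) and tracking how $j$ depends on $x$ in parts (1)--(3); no individual step is technically involved, as the theorem essentially expresses that length-based invariants built from finitely many atoms commute with atom-preserving directed unions.
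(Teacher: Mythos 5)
Your proposal is correct and follows essentially the same approach as the paper: parts (1) and (2) via finiteness of supports together with the ascending atom chain, and parts (3) and (4) via monotonicity of the relevant length sets and passage to suprema/limits in $\overline{\rr}$. Your observation in part (3) that the infimum stabilizes because it is a weakly decreasing sequence of positive integers is a slightly cleaner way to dispose of the bounded case than the paper's explicit choice of an index $h$, but the underlying idea is identical.
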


	\begin{proof}
		Let $j, r, s \in \nn$ such that $x \in M_j$ and $j \leq r \leq s$. Since $\mathcal{A}(M_r) \subseteq \mathcal{A}(M_{s})$, the inclusion $\mathsf{Z}_{M_r}(x) \subseteq \mathsf{Z}_{M_s}(x)$ holds. Now if $z\in \mathsf{Z}_{M_i}(x)$ for some $i \in \nn$ then $z \in\mathsf{Z}_M(x)$ by Remark \ref{rem: atoms of a PM are the increasing union of the atoms of the numerical semigroups forming a numerical approximation}. Conversely, if $z = a_1 + \cdots + a_n \in\mathsf{Z}_M(x)$ with $a_1, \ldots, a_n \in \mathcal{A}(M)$ then there exists $k \in\nn_{\geq j}$ such that $a_i \in \mathcal{A}(M_k)$ for each $i \in\llbracket 1,n \rrbracket$. Consequently, $z \in\mathsf{Z}_{M_k}(x)$. Hence $\mathsf{Z}_M(x) = \bigcup_{i \geq j} \mathsf{Z}_{M_i}(x)$. For all $y \in M$, let $j(y) \in \nn$ such that $y \in M_{j(y)}$. Thus,
		\[
			\mathsf{Z}(M) = \bigcup_{y \in M} \mathsf{Z}_M(y) = \bigcup_{y \in M} \bigcup_{i \geq j(y)}\mathsf{Z}_{M_i}(y) = \bigcup_{i \geq 1}\mathsf{Z}(M_i),
		\]
		from which $(1)$ follows. It is easy to see that $(2)$ readily follows from $(1)$.
		
		If $x = 0$ then the first part of $(3)$ clearly follows, so there is no loss in assuming that $x \neq 0$. Since $\mathsf{L}_{M_r}(x) \subseteq \mathsf{L}_{M_s}(x) \subseteq \mathsf{L}_M(x)$, the inequalities $\rho_{M_r}(x) \leq \rho_{M_s}(x) \leq \rho_M(x)$ hold, which implies that $\lim_{i} \rho_{M_{i + j}}(x)$ exists (in $\overline{\rr}$) and $\lim_{i} \rho_{M_{i + j}}(x) \leq \rho_M(x)$. For the reverse inequality, note that if $\mathsf{L}_M(x)$ is unbounded then $\rho_M(x) = \infty$. In this case, for each $n \in\nn$, there exists $z = a_1 + \cdots + a_l \in\mathsf{Z}_M(x)$ with $a_1, \ldots, a_l \in \mathcal{A}(M)$ satisfying that $l > n$. By virtue of $(2)$, there exists $k \in\nn_{\geq j}$ such that $l \in \mathsf{L}_{M_k}(x)$. Since $\mathsf{L}_{M_{i + j}}(x) \subseteq \mathsf{L}_{M_{i + j + 1}}(x)$ for each $i \in \nn$, we have $\lim_i \rho_{M_{i + j}}(x) = \infty$. On the other hand, if $\mathsf{L}_M(x)$ is bounded then, for some $h \in\nn_{\geq j}$, we have
		\begin{equation*}
				\rho_M(x) = \frac{\sup \mathsf{L}_M(x)}{\inf \mathsf{L}_M(x)} = \frac{\sup \bigcup_{i \geq j}\mathsf{L}_{M_i}(x)}{\inf \bigcup_{i \geq j}\mathsf{L}_{M_i}(x)} 
			 = \frac{\max \mathsf{L}_{M_h}(x)}{\min \mathsf{L}_{M_h}(x)}
			 = \rho_{M_h}(x) \leq \lim_{i \to \infty} \rho_{M_{i + j}}(x).
		\end{equation*}
		Next we prove that $\rho(M) = \lim_i \rho(M_i)$. We already established that, for each $i \in \nn$, the inequality $\rho_{M_i}(y) \leq \rho_{M_{i + 1}}(y)$ holds for all $y \in M_i$. Consequently, $\rho(M_i) \leq \rho(M_{i + 1})$ for each $i \in \nn$ which, in turn, implies that $\lim_{i} \rho(M_i)$ exists (in $\overline{\rr}$). By definition, $\rho(M) \geq \rho_M(y)$ for all $y \in M$. Now fix $j \in \nn$, and let $y' \in M_j$. Since $\rho_M(y') \geq \rho_{M_j}(y')$, the inequality $\rho(M) \geq \rho_{M_j}(y')$ holds for all $y' \in M_j$, which implies that $\rho(M) \geq \rho(M_j)$. This, in turn, implies that $\rho(M) \geq \lim_i \rho(M_i)$. To prove the reverse inequality, observe that, for all $y \in M$, we have $\rho_M(y) = \lim_i \rho_{M_{i + j(y)}}(y) \leq \lim_i \rho(M_i)$. This implies that $\rho(M) \leq \lim_i \rho(M_i)$, and $(3)$ holds.
		
		For all $i \in \nn$, the inclusions $\mathcal{U}_m(M_i) \subseteq \mathcal{U}_m(M_{i + 1}) \subseteq \mathcal{U}_m(M)$ hold. Consequently, $\sup\,\mathcal{U}_m(M_i) \leq \sup\,\mathcal{U}_m(M_{i + 1}) \leq\sup\,\mathcal{U}_m(M)$ which, in turn, implies that $\lim_{i} \rho_m(M_i)$ exists (in $\overline{\rr}$) and $\lim_{i} \rho_m(M_i) \leq \rho_m(M)$. Now if $\mathcal{U}_m(M)$ is unbounded then for each $N \in\nn$ there exist $x \in M$ and $z, z' \in\mathsf{Z}_M(x)$ such that $|z| > N$ and $|z'| = m$. Since there exists $j \in \nn$ such that $z, z' \in \mathsf{Z}_{M_j}(x)$, the inequality $\rho_m(M_j) > N$ holds. This implies that $\lim_{i} \rho_m(M_i) = \infty$. Then there is no loss in assuming that $k \coloneqq \sup\,\mathcal{U}_m(M)$ is a positive integer. Let $x \in M$ such that $|z| = k$ and $|z'| = m$ for some $z,z' \in\mathsf{Z}_M(x)$. Since $z,z' \in \mathsf{Z}_{M_j}(x)$ for some $j \in\nn$, our argument follows.
	\end{proof}

%	\begin{cor} \label{cor: length density}
%		With notation as in Theorem~\ref{theorem: main idea}, if $M$ is a BFM such that $M \neq \nn_0$ and $x \in M^{\bullet}$ with $|\mathsf{L}(x)| \neq 1$ then $\LD_M(x) = \lim_{i} \LD_{M_{i + j}}(x)$ and $\LD(M) = \lim_i \LD(M_i)$.
%	\end{cor}
%
%	\begin{proof}
%		There is no loss in assuming that $M_i \neq \nn_0$ for each $i \in \nn$; consequently, the expression $\LD(M_i)$ is well defined by \cite[Corollary 1.3.3]{AGHK06} and \cite[Proposition 4.22]{CGG2019}. For some $j \in \nn$, we have $|\mathsf{L}_{M_j}(x)| \neq 1$ and we already established that, for every $l \in \mathsf{L}_M(x)$, there exists $k \in \nn$ such that $l \in \mathsf{L}_{M_{k + j}}(x)$. Since $M$ is a BFM, we may assume $\mathsf{L}_{M}(x) = \mathsf{L}_{M_{k + j}}(x)$. Hence we just need to prove that if $M$ has no accepted length density then $\LD(M) = \lim_i \LD(M_i)$.
%	\end{proof}

	\begin{cor} \cite[Theorem 3.2]{GoOn2017} \label{cor: formula for the elasticity of Puiseux monoids}
		Let $M$ be an atomic Puiseux monoid. If $0$ is a limit point of $M^{\bullet}$ then $\rho(M) = \infty$. Otherwise, $ \rho(M) = \frac{\sup\mathcal{A}(M)}{\inf\mathcal{A}(M)}$.
	\end{cor}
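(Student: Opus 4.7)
The plan is to derive this corollary as a direct application of Theorem~\ref{theorem: main idea}(3), reducing the computation of $\rho(M)$ to the classical formula for the elasticity of a numerical monoid. First, I would enumerate the atoms of $M$ as $a_1, a_2, \ldots$ and set $M_i = \langle a_1, \ldots, a_i \rangle$. Because each $a_j$ is an atom of $M$ and $M_i \subseteq M$, no $a_j$ admits a nontrivial factorization inside $M_i$; hence $\mathcal{A}(M_i) = \{a_1, \ldots, a_i\}$, and $(M_i)_{i \geq 1}$ is a numerical approximation of $M$ in the sense of Definition~\ref{def: numerical approximation}. Each $M_i$ is isomorphic to a numerical monoid by \cite[Theorem~4.2]{GOTTI2018}, and since the elasticity of a numerical monoid equals the ratio of its largest atom to its smallest and is preserved under this isomorphism, we obtain
\[
    \rho(M_i) = \frac{\max\{a_1, \ldots, a_i\}}{\min\{a_1, \ldots, a_i\}}.
\]

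Next I would take the limit $i \to \infty$ and invoke Theorem~\ref{theorem: main idea}(3), which identifies this limit with $\rho(M)$. If $0$ is not a limit point of $M^{\bullet}$, then $\inf \mathcal{A}(M) > 0$, and the monotone sequences $\max\{a_1, \ldots, a_i\}$ and $\min\{a_1, \ldots, a_i\}$ converge to $\sup \mathcal{A}(M)$ and $\inf \mathcal{A}(M)$, respectively, so $\rho(M) = \sup \mathcal{A}(M)/\inf \mathcal{A}(M)$, with the convention that the right-hand side is $\infty$ when $\sup \mathcal{A}(M) = \infty$. If instead $0$ is a limit point of $M^{\bullet}$, I would first note that any nonzero $b \in M$ is a finite sum of atoms, each of which is at most $b$ (since $M \subseteq \qq_{\geq 0}$); hence $0$ is also a limit point of $\mathcal{A}(M)$, so $\inf \mathcal{A}(M) = 0$. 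Then $\min\{a_1, \ldots, a_i\} \to 0$ while $\max\{a_1, \ldots, a_i\} \geq a_1 > 0$, giving $\rho(M_i) \to \infty$, and Theorem~\ref{theorem: main idea}(3) yields $\rho(M) = \infty$.

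The main obstacle is essentially bookkeeping: one must verify the equivalence between $0$ being a limit point of $M^{\bullet}$ and $0$ being a limit point of $\mathcal{A}(M)$, which rests on the atomicity of $M$ together with the nonnegativity of its atoms. A minor secondary point is handling the case where $\mathcal{A}(M)$ is finite; there the sequence $(M_i)_{i \geq 1}$ stabilizes and the corollary reduces directly to the numerical-monoid formula, while for infinite $\mathcal{A}(M)$ the monotone-convergence argument above applies.
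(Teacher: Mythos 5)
Your argument is correct and follows essentially the same route as the paper: both build the numerical approximation $N_i = \langle a_1, \ldots, a_i \rangle$, apply Theorem~\ref{theorem: main idea}(3) to pass to the limit, and invoke the numerical-monoid formula $\rho(N) = \max\mathcal{A}(N)/\min\mathcal{A}(N)$ (the content of \cite[Theorem~2.1]{ChHoMoo2006}). The one step you usefully make explicit, which the paper leaves to the reader, is that $0$ being a limit point of $M^{\bullet}$ is equivalent to $\inf\mathcal{A}(M) = 0$.
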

	\begin{proof}
		Let $(N_i)_{i \geq 1}$ be a numerical approximation of $M$. If $0$ is a limit point of $M^{\bullet}$ then, for each $n \in \nn$, there exists $j \in \nn$ such that $\rho(N_j) > n$ by \cite[Theorem 2.1]{ChHoMoo2006}, which implies that $\lim_{i} \rho(N_i) = \infty$ since $(\rho(N_i))_{i \geq 1}$ is nondecreasing. Now if $0$ is not a limit point of $M^{\bullet}$ then
		\begin{equation*}
				\rho(M) = \lim_{i \to \infty} \,\rho(N_i)
				= \lim_{i \to \infty} \, \frac{\max \mathcal{A}(N_i)}{\min \mathcal{A}(N_i)}
				= \frac{\sup\mathcal{A}(M)}{\inf\mathcal{A}(M)},
		\end{equation*}
		where the second equality follows from \cite[Theorem 2.1]{ChHoMoo2006}.
	\end{proof}
	\begin{cor} \cite[Theorem 3.4]{GoOn2017}
		Let $M$ be an atomic Puiseux monoid satisfying that $\rho(M) < \infty$. Then the elasticity of $M$ is accepted if and only if $\mathcal{A}(M)$ has both a maximum and a minimum.
	\end{cor}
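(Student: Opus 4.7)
The plan is to invoke the previous corollary, which under the hypothesis $\rho(M) < \infty$ both yields $\rho(M) = \sup\mathcal{A}(M)/\inf\mathcal{A}(M)$ and guarantees $\inf\mathcal{A}(M) > 0$ (otherwise $0$ would be a limit point of $M^{\bullet}$). With these in hand, the central tool is a length-sandwich: for any nonzero $x \in M$ and any factorization $z = a_1 + \cdots + a_\ell \in \mathsf{Z}_M(x)$, each $a_i$ lies in $[\inf\mathcal{A}(M),\, \sup\mathcal{A}(M)]$ and the $a_i$ sum to $x$, so
\[
\frac{x}{\sup\mathcal{A}(M)} \;\leq\; \ell \;\leq\; \frac{x}{\inf\mathcal{A}(M)}.
\]
In particular $\mathsf{L}_M(x)$ is a finite subset of $\nn$, and its supremum and infimum are attained.

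For the ``only if'' direction, suppose $\rho_M(x) = \rho(M)$. The case $x = 0$ forces $\rho(M) = 1$, i.e.\ $\sup\mathcal{A}(M) = \inf\mathcal{A}(M)$, so $\mathcal{A}(M)$ has at most one element and the conclusion is trivial; assume $x \neq 0$. Let $n = \max\mathsf{L}_M(x)$ and $m = \min\mathsf{L}_M(x)$ and pick factorizations realizing these. Then
\[
n \cdot \inf\mathcal{A}(M) \;\leq\; x \;\leq\; m \cdot \sup\mathcal{A}(M),
\]
where the left inequality is tight only if every atom in the length-$n$ factorization equals $\inf\mathcal{A}(M)$, and the right only if every atom in the length-$m$ factorization equals $\sup\mathcal{A}(M)$. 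Strictness in either case yields $n/m < \sup\mathcal{A}(M)/\inf\mathcal{A}(M) = \rho_M(x)$, a contradiction; hence both inequalities are equalities, which forces $\inf\mathcal{A}(M), \sup\mathcal{A}(M) \in \mathcal{A}(M)$, making them the minimum and maximum.

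For the ``if'' direction, let $a = \min\mathcal{A}(M)$ and $A = \max\mathcal{A}(M)$, and set $x = \mathsf{n}(a)\mathsf{n}(A) \in \nn$. The positive integers $x/a = \mathsf{d}(a)\mathsf{n}(A)$ and $x/A = \mathsf{n}(a)\mathsf{d}(A)$ are the lengths of the factorizations of $x$ obtained by summing only $a$'s, respectively only $A$'s, so $\{x/A,\, x/a\} \subseteq \mathsf{L}_M(x)$. Combining this with the length-sandwich yields $\min\mathsf{L}_M(x) = x/A$ and $\max\mathsf{L}_M(x) = x/a$, whence $\rho_M(x) = A/a = \rho(M)$. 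The main delicate point is the equality analysis in the forward direction; once the sandwich is in place, each remaining step reduces to a short arithmetic observation combined with the previous corollary.
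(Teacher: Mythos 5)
Your proof is correct, but it takes a genuinely different route from the paper. The paper's proof applies Theorem~\ref{theorem: main idea} to a numerical approximation $(N_i)_{i\geq 1}$: for the forward direction it locates a numerical monoid $N_j$ with $\rho_{N_j}(x) = \rho_M(x)$ and then invokes the formula $\rho(N_j) = \max\mathcal{A}(N_j)/\min\mathcal{A}(N_j)$ for numerical monoids (\cite[Theorem 2.1]{ChHoMoo2006}), and for the converse it cites the general criterion for accepted elasticity in \cite[Theorem 3.1.4]{AGHK06}, together with the fact that some $N_j$ already contains both extreme atoms. You instead argue directly and self-containedly in $M$ via the length sandwich $x/\sup\mathcal{A}(M) \leq \ell \leq x/\inf\mathcal{A}(M)$: this both shows that every $\mathsf{L}_M(x)$ is finite (so extrema of lengths are attained) and pins down exactly when equality occurs, forcing the extreme atoms to actually appear. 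The equality analysis in your forward direction is sound — strictness in either inequality pushes $n/m$ strictly below $\sup\mathcal{A}(M)/\inf\mathcal{A}(M)$ — and your explicit witness $x = \mathsf{n}(a)\mathsf{n}(A)$ cleanly handles the converse. The paper's route stays on theme by showcasing the approximation machinery and outsourcing the numerical-monoid formula; yours buys a shorter, elementary, and more transparent argument that avoids the extra citations, at the cost of not illustrating the approximation technique the paper is advertising.
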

	\begin{proof}
		Let $(N_i)_{i \geq 1}$ be a numerical approximation of $M$. To tackle the direct implication, note that for some $x \in M$, $j \in \nn$, and $L,l \in \mathsf{L}_M(x)$ we have
		\[
			\frac{\sup \mathcal{A}(M)}{\inf \mathcal{A}(M)} = \rho(M) = \rho_M(x) = \frac{L}{l} = \rho_{N_j}(x) = \frac{\max\mathcal{A}(N_j)}{\min\mathcal{A}(N_j)},
		\]
		where the last equality follow from \cite[Theorem 2.1]{ChHoMoo2006}. The reverse implication follows from \cite[Theorem 3.1.4]{AGHK06} and the fact that, for some $j \in \nn$, the monoid $N_j$ contains the minimum and maximum of $\mathcal{A}(M)$.
	\end{proof}
	\begin{cor} \cite[Proposition 3.1]{MGOTTI2019}
		Let $M$ be an atomic Puiseux monoid. If $M$ contains a stable atom $a \in \mathcal{A}(M)$ then $\rho_k(M)$ is infinite for all sufficiently large $k$.
	\end{cor}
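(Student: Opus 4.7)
The plan is to construct, for every sufficiently large $k$, a single element of $M$ that simultaneously admits a factorization of length $k$ and factorizations of arbitrarily large length, which forces $\rho_k(M) = \sup\mathcal{U}_k(M) = \infty$. No appeal to the approximation machinery of Theorem~\ref{theorem: main idea} is needed; the argument is direct.

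First I would unpack the hypothesis. Set $n := \mathsf{n}(a)$ and $d := \mathsf{d}(a)$. By the definition of a stable atom, the set $\{a' \in \mathcal{A}(M) : \mathsf{n}(a') = n\}$ is infinite, so I can pick pairwise distinct atoms $b_1, b_2, \ldots \in \mathcal{A}(M) \setminus \{a\}$ each with numerator $n$. Writing $b_j = n/c_j$ with $\gcd(n, c_j) = 1$, the positive integers $c_j$ are pairwise distinct, so $\sup_{j} c_j = \infty$. The key identity is that $d \cdot a = n = c_j \cdot b_j$ for every $j$, which lets me swap $d$ copies of $a$ for $c_j$ copies of $b_j$ inside any factorization without altering the underlying element.

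The main step is the following construction. For each $m \in \nn_0$, consider
\[
	x_m \,:=\, (d + m)\,a \,\in\, M.
\]
On the one hand, $x_m$ has a factorization of length $d + m$ obtained by writing it as a sum of $d+m$ copies of $a$. On the other hand, for every $j \geq 1$, the equality $x_m = c_j b_j + m a$ produces a factorization of length $c_j + m$. Therefore $\{d + m\} \cup \{c_j + m : j \geq 1\} \subseteq \mathsf{L}_M(x_m) \subseteq \mathcal{U}_{d + m}(M)$, and since the $c_j$ are unbounded, $\rho_{d + m}(M) = \infty$. Letting $m$ range over $\nn_0$ yields $\rho_k(M) = \infty$ for every $k \geq d$, which is the claim.

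I do not anticipate any real obstacle: the whole proof reduces to spotting the identity $d a = c_j b_j$ and using it to build the element $x_m$. The only point to double-check is that the atoms $b_j$ can be chosen distinct from $a$, which is immediate because infinitely many atoms share the numerator $n$, and that the denominators $c_j$ are unbounded, which follows from their being pairwise distinct positive integers.
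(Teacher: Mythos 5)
Your proof is correct, and it takes a genuinely different route from the paper. The paper proves the result by invoking its own Theorem~3.3: it chooses a numerical approximation $(N_i)_{i\geq 1}$ with $a \in N_1$, argues that $\rho_{\mathsf{d}(a)}(N_i)$ is unbounded because each finitely generated $N_j$ misses some atom $b$ with $\mathsf{n}(b) = \mathsf{n}(a)$ and large denominator, concludes $\rho_{\mathsf{d}(a)}(M) = \infty$ by passing to the limit, and then appeals to a general monotonicity result for local elasticities (\cite[Proposition~1.4.2]{AGHK06}) to propagate infinitude to all $k \geq \mathsf{d}(a)$. You instead argue entirely within $M$: the identity $\mathsf{d}(a)\,a = \mathsf{n}(a) = c_j\,b_j$ together with the padded element $x_m = (d+m)a$ directly exhibits, for every $k = d + m \geq \mathsf{d}(a)$, factorizations of length $k$ and of unbounded length $c_j + m$ of the same element. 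This is more elementary, needs no approximation, and has the side benefit of making the ``sufficiently large'' threshold explicit ($k \geq \mathsf{d}(a)$) and of replacing the external citation for the last step with a one-line padding argument. The cost is only that it does not showcase the approximation technique, which is the expository point of the corollary in the paper. Both proofs ultimately rest on the same combinatorial observation --- atoms sharing a numerator can be traded for one another --- but you exploit it directly inside $M$ rather than inside the approximating numerical monoids.
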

	\begin{proof}
		Let $(N_i)_{i \geq 1}$ be a numerical approximation of $M$, and suppose without loss of generality that $a \in N_1$. For each $j \in \nn$, there exists $k \in\nn$ such that the inequality $\rho_{\mathsf{d}(a)}(N_{j + k}) > \rho_{\mathsf{d}(a)}(N_j)$ holds since $N_j$ is finitely generated. Therefore, $\lim_{i} \rho_{\mathsf{d}(a)}(N_i) = \infty$. By Theorem~\ref{theorem: main idea}, we have $\rho_{\mathsf{d}(a)}(M) = \infty$. Our argument follows after \cite[Proposition 1.4.2]{AGHK06}.
	\end{proof}

	\section{Set of Distances and Length Density}
	
	It is straightforward to construct a Puiseux monoid $M$ with an approximation $(M_i)_{i \geq 1}$ such that $\Delta(M) \neq \bigcup_{i \geq 1} \Delta(M_i)$. Consequently, the approach we used in Theorem~\ref{theorem: main idea} to compute invariants like the set of lengths is not going to work for the set of distances. However, using limits of sets we can obtain a result similar to Theorem~\ref{theorem: main idea}.
	
	\begin{definition} \label{def: limit of sets}
		Let $(S_i)_{i \geq 1}$ be a sequence of sets, and let $\liminf_{i} S_i$ and $\limsup_{i} S_i$ be the sets
		\[
		\liminf_{i \to \infty} S_i \coloneqq \bigcup_{i \geq 1} \bigcap_{j \geq i} S_j \hspace{0.6cm} \text{ and } \hspace{0.6cm} \limsup_{i \to \infty} S_i \coloneqq \bigcap_{i \geq 1} \bigcup_{j \geq i} S_i.
		\]
		We say that $\lim_{i} S_i$ exists and is equal to $\liminf_{i} S_i$ provided that $\liminf_{i} S_i = \limsup_{i} S_i$.
	\end{definition}

	Observe that Definition~\ref{def: limit of sets} is consistent with the notation used in Theorem~\ref{theorem: main idea} since if $(S_i)_{i \geq 1}$ is an increasing sequence of sets then $\lim_{i} S_i = \bigcup_{i \geq 1} S_i$ as the reader can easily prove.
	
	\begin{proposition}\label{prop: approximating sets of distances}
		Let $M$ be a Puiseux monoid with an approximation $(M_i)_{i \geq 1}$, and let $x$ be an element of $M$. Then $\Delta_M(x) \subseteq \liminf_{i} \Delta_{M_i}(x)$ and $\Delta(M) \subseteq \liminf_{i} \Delta(M_i)$.
	\end{proposition}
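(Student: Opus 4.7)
The plan is to deduce the inclusion directly from part (2) of Theorem~\ref{theorem: main idea}, namely $\mathsf{L}_M(x) = \bigcup_{i \geq j}\mathsf{L}_{M_i}(x)$, together with the fact that this union is increasing: since $\mathcal{A}(M_i) \subseteq \mathcal{A}(M_{i+1})$, we have $\mathsf{L}_{M_i}(x) \subseteq \mathsf{L}_{M_{i+1}}(x) \subseteq \mathsf{L}_M(x)$ for all $i \geq j$, where $j$ is chosen so that $x \in M_j$.

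First I would take $d \in \Delta_M(x)$ and pick $l \in \mathsf{L}_M(x)$ witnessing this distance, so that $\mathsf{L}_M(x) \cap \llbracket l, l+d \rrbracket = \{l, l+d\}$. By the increasing-union formula for sets of lengths, both $l$ and $l+d$ lie in $\mathsf{L}_{M_i}(x)$ for all sufficiently large $i$; let $k_0 \geq j$ be such an index. For each $i \geq k_0$, the containment $\mathsf{L}_{M_i}(x) \subseteq \mathsf{L}_M(x)$ forces $\mathsf{L}_{M_i}(x) \cap \llbracket l, l+d \rrbracket \subseteq \{l, l+d\}$, and the reverse containment is immediate from $l, l+d \in \mathsf{L}_{M_i}(x)$. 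Hence $d \in \Delta_{M_i}(x)$ for every $i \geq k_0$, which gives
\[
    d \in \bigcap_{i \geq k_0}\Delta_{M_i}(x) \subseteq \bigcup_{n \geq 1}\bigcap_{i \geq n}\Delta_{M_i}(x) = \liminf_{i \to \infty}\Delta_{M_i}(x).
\]
This proves the first inclusion.

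For the second inclusion, I would let $d \in \Delta(M)$ and pick $x \in M$ with $d \in \Delta_M(x)$. Applying the first part produces an index $k_0$ such that $d \in \Delta_{M_i}(x) \subseteq \Delta(M_i)$ for every $i \geq k_0$, and the same computation as above gives $d \in \liminf_i \Delta(M_i)$.

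There is no real obstacle here; the only point requiring a moment of care is that the witness $l$ and the upper endpoint $l+d$ must eventually live in the \emph{same} $\mathsf{L}_{M_i}(x)$, which is exactly where the increasing nature of the approximation (guaranteed by $\mathcal{A}(M_i) \subseteq \mathcal{A}(M_{i+1})$) is used. I would also note in passing why the reverse inclusion can fail: new short factorizations appearing in later $M_i$'s may fill in gaps that are present in $M_i$ but absent in $M$, producing spurious distances in $\Delta(M_i)$ that never occur in $\Delta(M)$; this is consistent with the remark preceding the proposition.
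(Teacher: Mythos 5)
Your argument is correct and follows essentially the same line as the paper's: the paper works with a pair of factorizations $z, z'$ of $x$ landing in some $\mathsf{Z}_{M_k}(x)$ via Theorem~\ref{theorem: main idea}(1), while you work with the pair of lengths $l, l+d$ landing in some $\mathsf{L}_{M_{k_0}}(x)$ via Theorem~\ref{theorem: main idea}(2), which is an equivalent reformulation. Both then use the containment $\mathsf{L}_{M_i}(x) \subseteq \mathsf{L}_M(x)$ to conclude the gap persists in every $M_i$ with $i$ large, and reduce $\Delta(M)$ to the pointwise statement in the same way.
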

	\begin{proof}
		Let $d \in \Delta_M(x)$. Then there exist factorizations $z, z' \in \mathsf{Z}_M(x)$ satisfying that $|z'| - |z| = d$ and $[|z|, |z'|] \cap \mathsf{L}_M(x) = \{|z|, |z'|\}$. Let $k \in \nn$ such that $z, z' \in \mathsf{Z}_{M_k}(x)$. By virtue of Theorem~\ref{theorem: main idea}, we have that $d \in \Delta_{M_h}(x)$ for all $h \in \nn_{\geq k}$ which, in turn, implies that $d \in \bigcap_{j \geq k} \Delta_{M_j}(x)$. Then $d \in \liminf_{i} \Delta_{M_i}(x)$. Finally, let $d \in \Delta(M)$. By definition, there exists $x \in M^{\bullet}$ such that $d \in \Delta_M(x)$. As we already showed, $d \in \bigcap_{j \geq k} \Delta_{M_j}(x)$ for some $k \in \nn$. Consequently, $d \in \bigcap_{j \geq k} \Delta(M_j)$, from which our result follows.
	\end{proof}

	Proposition~\ref{prop: approximating sets of distances} can be useful when analyzing the set of lengths of particular classes of atomic Puiseux monoids. Consider the following examples.
	
	\begin{example}
		Let $r \in \mathbb{Q}_{<1}$ such that the rational cyclic monoid over $r$, that is, $S_r \coloneqq \langle r^n \mid n \in \nn_0 \rangle$, is atomic. Then $\mathsf{n}(r) > 1$ by \cite[Theorem 6.2]{GG2018}. Fix $i \in \nn$, and consider the numerical monoid $N_i = \langle \mathsf{n}(r)^i, \mathsf{n}(r)^{i - 1}\mathsf{d}(r), \ldots, \mathsf{d}(r)^i \rangle$. By virtue of \cite[Corollary 20]{KOP2016}, we have $\Delta(N_i) = \{\mathsf{d}(r) - \mathsf{n}(r)\}$.  It is not hard to see that $(\mathsf{d}(r)^{-i} N_i)_{i \geq 1}$ is a numerical approximation of $S_r$. Therefore, $\Delta(S_r) \subseteq \{\mathsf{d}(r) - \mathsf{n}(r)\}$ by Proposition~\ref{prop: approximating sets of distances}. Following a similar reasoning we obtain that if $r > 1$ and $S_r$ is atomic then the inclusion $\Delta(S_r) \subseteq \{\mathsf{n}(r) - \mathsf{d}(r)\}$ holds. This result was first proved in \cite[Theorem 3.3]{ScGG2019}.
	\end{example}

	\begin{example} \label{ex: extending multicyclic monoids}
		Let $\mathcal{B}$ be a nonempty subset of $\qq_{>0} \setminus \nn$ such that for all $b, b' \in \mathcal{B}$ with $b \neq b'$ we have $\mathsf{n}(b) > 1$, $\gcd(\mathsf{d}(b), \mathsf{d}(b')) = 1$, and $|\mathsf{n}(b) - \mathsf{d}(b)| = |\mathsf{n}(b') - \mathsf{d}(b')|$. Set $M_{\mathcal{B}} \coloneqq \langle b^n \mid b \in \mathcal{B},\, n \in \nn_0 \rangle$. Now given an ordering $b_1, b_2, \ldots$ of the elements of $\mathcal{B}$, let $\mathcal{B}_i = \{b_1, \ldots, b_i\}$ and set $M_{\mathcal{B}_i} \coloneqq \langle b^n \mid b \in \mathcal{B}_i, \,n \in \nn_0 \rangle$ for each $i \in \nn$. The sequence $(M_{\mathcal{B}_i})_{i \geq 1}$ is an approximation of $M_{\mathcal{B}}$ by \cite[Proposition 3.5]{HP2020}. Moreover, for each $i \in \nn$, $\Delta(M_{\mathcal{B}_i}) = \{|\mathsf{n}(b_1) - \mathsf{d}(b_1)|\}$ by \cite[Theorem 4.9]{HP2020}. Therefore, $\Delta(M_{\mathcal{B}}) \subseteq \{|\mathsf{n}(b_1) - \mathsf{d}(b_1)|\}$ by Proposition~\ref{prop: approximating sets of distances}.
	\end{example}

	\begin{remark}
		\textup{Example~\ref{ex: extending multicyclic monoids} extends part~$(2)$ of \cite[Theorem 4.9]{HP2020} to a larger class of Puiseux monoids.}
	\end{remark}

	The next example shows that, in general, $\Delta(M) \neq \liminf_{i} \Delta(M_i)$.
	
	\begin{example} \label{ex: delta set cannot be approximated}
		Consider the rational cyclic monoid $S_r$ with $r \in \qq_{>1} \setminus \nn$. For each $i \in \nn$, set 
		\[
			M_i \coloneqq \left\langle \left\{r^{2k} \mid k \in \nn_0\right\} \cup \left\{r^{2j - 1} \mid j \in \llbracket 1,i \rrbracket\right\} \right\rangle.
		\]
		It is not hard to prove that $(M_i)_{i \geq 1}$ is an approximation of $S_r$. Now fix $i \in \nn$, and let $x_i = \mathsf{n}(r)^2r^{2i} \in M_i$. Clearly, $z = \mathsf{n}(r)^2r^{2i}$ and $z' = \mathsf{d}(r)^2r^{2i + 2}$ are two factorizations of $x_i$ in $M_i$.
		\begin{claim} \label{claim: unique factorization of minimum length}
			$z' = \mathsf{d}(r)^2r^{2i + 2} \in \mathsf{Z}_{M_i}(x_i)$ is the factorization of minimum length of $x_i$ in $M_i$.
		\end{claim}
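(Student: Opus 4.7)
The plan is to take an arbitrary factorization $z'' \in \mathsf{Z}_{M_i}(x_i)$, write it as $z'' = \sum_t r^{\ell_t}$ (with multiplicities), set $F \coloneqq \max_t \ell_t$, and argue by cases on $F$. Throughout, let $n \coloneqq \mathsf{n}(r)$ and $d \coloneqq \mathsf{d}(r)$, so that $n > d \geq 2$ and $\gcd(n,d) = 1$. Because the odd generators of $M_i$ have exponent at most $2i - 1$, either $F \leq 2i - 1$ or $F$ is even. The case $F \leq 2i$ would be immediate: since each summand satisfies $r^{\ell_t} \leq r^{2i}$, one gets $|z''| \geq x_i/r^{2i} = n^2 > d^2 = |z'|$, and in particular $z'' \neq z'$.

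The heart of the argument handles the remaining cases $F = 2i + 2$ and $F \geq 2i + 4$. In both, clearing denominators in $\sum_t r^{\ell_t} = x_i$ by multiplying through by $d^F$ yields the integer identity
\[
n^{2i+2}\, d^{F-2i} \;=\; \sum_{\ell} a_\ell\, n^\ell d^{F-\ell},
\]
where $a_\ell$ denotes the multiplicity of $r^\ell$ in $z''$. Reducing modulo $d^2$ would kill the left-hand side (since $F - 2i \geq 2$) together with every term on the right with $\ell \leq F - 2$. The only potentially surviving residues come from $\ell \in \{F - 1, F\}$; however, $\ell = F - 1$ is odd with $F - 1 \geq 2i + 1 > 2i - 1$, so $r^{F-1}$ is not a generator of $M_i$. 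Hence $a_F n^F \equiv 0 \pmod{d^2}$, and the coprimality $\gcd(n,d) = 1$ forces $d^2 \mid a_F$.

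To conclude, I would combine this divisibility with the size bound $a_F r^F \leq x_i$, i.e., $a_F \leq n^{2i+2-F} d^{F-2i}$. For $F = 2i + 2$ this reads $a_F \leq d^2$, so $a_F = d^2$ exactly; since $d^2 r^{2i+2} = x_i$ already, all remaining multiplicities must vanish and $z'' = z'$. For $F = 2k$ with $k \geq i + 2$, the bound becomes $a_F \leq d^{F-2i}/n^{F-2i-2} < d^2$ (using $F - 2i - 2 \geq 2$ together with $d < n$), which contradicts $a_F \geq d^2$. Together with the $F \leq 2i$ case, this shows $|z''| \geq d^2$ with equality only when $z'' = z'$, establishing the claim.

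The main obstacle will be the modular computation, specifically the step excluding $\ell = F - 1$ from the surviving residues modulo $d^2$. This hinges on the asymmetry in the generating set of $M_i$: every even power of $r$ is a generator, but odd powers are restricted to $r, r^3, \ldots, r^{2i-1}$. Without this restriction the $r^{F-1}$ term would persist under the reduction, and the divisibility $d^2 \mid a_F$—the key input that pins down $a_F = d^2$ and forces $z'' = z'$—would not follow.
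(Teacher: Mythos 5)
Your proof is correct, and it takes a genuinely different (and somewhat tighter) route than the paper's. The paper argues by contradiction: it assumes $z''$ is a factorization of \emph{minimum} length distinct from $z'$, sorts the exponents increasingly, uses the exchange $\mathsf{n}(r)^{s_{k+1}-s_k} r^{s_k} = \mathsf{d}(r)^{s_{k+1}-s_k} r^{s_{k+1}}$ to force $c_k < \mathsf{n}(r)^{s_{k+1}-s_k}$ for all $k$, and then clears denominators in $\sum_{k\geq m} c_k r^{s_k} = \mathsf{d}(r)^2 r^{2i+2}$ to extract a divisibility contradiction at the \emph{smallest} surviving exponent $s_m$ (leaving the case $m \geq t$ to the reader). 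Your argument, by contrast, applies to an arbitrary factorization $z''$, works at the \emph{largest} exponent $F$, and reads off $\mathsf{d}(r)^2 \mid a_F$ directly by reducing the cleared-denominators identity modulo $\mathsf{d}(r)^2$; the asymmetry of $\mathcal{A}(M_i)$ (odd exponents stop at $2i-1$) is used precisely to kill the $\ell = F-1$ term, as you flag. Combined with the size bound $a_F \leq \mathsf{n}(r)^{2i+2-F}\,\mathsf{d}(r)^{F-2i}$, this pins $a_F = \mathsf{d}(r)^2$ when $F = 2i+2$ (hence $z'' = z'$), rules out $F \geq 2i+4$ outright, and for $F \leq 2i$ gives $|z''| \geq \mathsf{n}(r)^2 > \mathsf{d}(r)^2$. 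Your version buys two things: it avoids the preliminary reduction to a minimum-length factorization, and it quantifies the gap, showing every factorization other than $z'$ has length at least $\mathsf{n}(r)^2$ --- which in fact already yields $\mathsf{n}(r)^2 - \mathsf{d}(r)^2 \in \Delta_{M_i}(x_i)$ and thereby subsumes Claim~\ref{claim: element in the delta set} of the example. The paper's proof, for its part, introduces the exchange transformation because it reuses it explicitly in the proof of that second claim.
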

		\begin{proof}
			Let $z'' = \sum_{k = 0}^{n} c_kr^{s_k} \in \mathsf{Z}_{M_i}(x_i)$ with coefficients $c_0, \ldots, c_n \in \nn_0$ and exponents $s_0, \ldots, s_n \in \{2k \mid k \in \nn_0\} \cup \{2j - 1 \mid j \in \llbracket 1,i \rrbracket\}$, and assume by contradiction that $z''$ is a factorization of minimum length of $x_i$ in $M_i$ satisfying that $z'' \neq z'$. There is no loss in assuming that $s_l < s_r$ for $l < r$, $[r^{s_l}, r^{s_{l + 1}}] \cap \mathcal{A}(M_i) = \{r^{s_l}, r^{s_{l + 1}}\}$ for all $l \in \llbracket 0, n - 1 \rrbracket$ and $s_t = 2i + 2$ for some $t \in \llbracket 0,n \rrbracket$. Note that $c_k < \mathsf{n}(r)^{s_{k + 1} - s_k}$ for each $k \in \llbracket 0,n \rrbracket$; otherwise, using the transformation $\mathsf{n}(r)^{s_{k + 1} - s_k}r^{s_k} = \mathsf{d}(r)^{s_{k + 1} - s_k}r^{s_{k + 1}}$ we can generate a new factorization $z^* \in \mathsf{Z}_{M_i}(x_i)$ such that $|z^*| < |z''|$, which is a contradiction. Now let $m$ be the smallest nonnegative integer such that $c_m \neq 0$, and consider the equation 
			\begin{equation} \label{eq: equality for contradiction}
				\sum_{k = m}^{n} c_kr^{s_k} = \mathsf{d}(r)^2r^{2i + 2}.
			\end{equation}
			If $m < t$ then after clearing denominators in Equation~\eqref{eq: equality for contradiction} we generate a contradiction with the fact that $c_m < \mathsf{n}(r)^{s_{m + 1} - s_m}$. We obtain a similar contradiction for the case where $m \geq t$ as the reader can verify. Therefore, there exists exactly one factorization of minimum length of $x_i$ in $M_i$, namely $z'$.
		\end{proof}
	
		\noindent Now let $z^* = \sum_{k = 0}^{n} c_kr^{s_k} \in \mathsf{Z}_{M_i}(x_i)$ with coefficients $c_0, \ldots, c_n \in \nn_0$ and exponents $s_0, \ldots, s_n \in \{2k \mid k \in \nn_0\} \cup \{2j - 1 \mid j \in \llbracket 1,i \rrbracket\}$. Suppose, without loss of generality, that $s_l < s_r$ for $l < r$ and $[r^{s_l}, r^{s_{l + 1}}] \cap \mathcal{A}(M_i) = \{r^{s_l}, r^{s_{l + 1}}\}$ for every $l \in \llbracket 0, n - 1 \rrbracket$. Note that in the proof of Claim~\ref{claim: unique factorization of minimum length}, we established that if $c_k < \mathsf{n}(r)^{s_{k + 1} - s_k}$ for each $k \in \llbracket 0,n \rrbracket$ then $z^* = z'$.
		
		\begin{claim}\label{claim: element in the delta set}
			If $|z^*| < |z| = \mathsf{n}(r)^2$ then $z^* = z'$.
		\end{claim}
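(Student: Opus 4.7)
The plan is to argue by contradiction: assume $|z^*| < \mathsf{n}(r)^2$ and $z^* \neq z'$, and derive a contradiction. The engine is the transformation already used in Claim~\ref{claim: unique factorization of minimum length}: whenever some coefficient in the normal form of $z^*$ satisfies $c_k \geq \mathsf{n}(r)^{s_{k+1}-s_k}$, replacing $\mathsf{n}(r)^{s_{k+1}-s_k}$ copies of $r^{s_k}$ by $\mathsf{d}(r)^{s_{k+1}-s_k}$ copies of $r^{s_{k+1}}$ yields a factorization of strictly smaller length. Iterating this procedure produces a finite, length-decreasing chain of factorizations. The $p$-adic/divisibility analysis carried out in the proof of Claim~\ref{claim: unique factorization of minimum length} shows that the only ``irreducible'' factorization of $x_i$ (one in which no such reduction applies) is $z'$, so the chain terminates at $z'$.

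I would then classify each reduction along the chain by the exponent gap $s_{k+1}-s_k$, which by the atom structure of $M_i$ is either $1$ (forced when $s_k \leq 2i-1$) or $2$ (forced when $s_k \geq 2i$ is even). A gap-$1$ reduction drops the length by $\mathsf{n}(r)-\mathsf{d}(r)$ and a gap-$2$ reduction by $\mathsf{n}(r)^2-\mathsf{d}(r)^2$. If $k_1,k_2$ denote the numbers of each type of reduction along the chain from $z^*$ to $z'$, then
\[
|z^*| - \mathsf{d}(r)^2 \;=\; k_1\bigl(\mathsf{n}(r)-\mathsf{d}(r)\bigr) + k_2\bigl(\mathsf{n}(r)^2-\mathsf{d}(r)^2\bigr).
\]
The hypothesis $|z^*| < \mathsf{n}(r)^2 = \mathsf{d}(r)^2 + (\mathsf{n}(r)^2-\mathsf{d}(r)^2)$ forces $k_2 = 0$, since a single gap-$2$ reduction already pushes the total drop to at least $\mathsf{n}(r)^2-\mathsf{d}(r)^2$.

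With only gap-$1$ reductions performed along the chain, the coefficients attached to atoms $r^s$ with $s \geq 2i+2$ are preserved throughout. Hence $z^*$ already contains exactly $\mathsf{d}(r)^2$ copies of $r^{2i+2}$ and no atoms of larger exponent, matching $z'$ on this part. Since $\mathsf{d}(r)^2\, r^{2i+2} = x_i$ and the remaining atoms appearing in $z^*$ (those with $s_k \leq 2i$) contribute non-negatively to $\pi(z^*) = x_i$, their contribution must vanish, so all of their coefficients are zero. This gives $z^* = z'$, contradicting our standing assumption. The main delicate point is the termination step at the very beginning: one has to re-invoke the divisibility argument from Claim~\ref{claim: unique factorization of minimum length} to guarantee that the only irreducible factorization of $x_i$ is $z'$; the subsequent length bookkeeping and the invariance of the high-exponent coefficients under gap-$1$ reductions are then essentially immediate.
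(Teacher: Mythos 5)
Your proof is correct and shares the paper's overall skeleton: build the greedy reduction chain $z^*=z_0,\dots,z_m=z'$ via the transformation~\eqref{eq: transformation}, invoke the divisibility argument of Claim~\ref{claim: unique factorization of minimum length} to justify that the chain terminates at $z'$, and then argue that the chain cannot cross the $r^{2i}\!\to r^{2i+2}$ barrier. Where you differ from the paper is in how that barrier-crossing is excluded. The paper first observes that, since transformations only raise exponents, $z^*$ has no atom above $r^{2i+2}$; from this it deduces that the step $\mathsf{n}(r)^2 r^{2i}=\mathsf{d}(r)^2 r^{2i+2}$ must occur somewhere in the chain, and then the contradiction is a \emph{count} argument: that step requires at least $\mathsf{n}(r)^2$ copies of $r^{2i}$ present simultaneously, yet every intermediate length is $\leq |z^*| < \mathsf{n}(r)^2$. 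Your argument instead uses a \emph{cumulative length-drop} accounting: the total drop $|z^*|-\mathsf{d}(r)^2$ is strictly less than $\mathsf{n}(r)^2-\mathsf{d}(r)^2$, while any single gap-2 step already contributes $\mathsf{n}(r)^2-\mathsf{d}(r)^2$ to the drop, forcing $k_2=0$. This is a slightly cleaner and more uniform route (it rules out all gap-2 steps, not just the one at $r^{2i}$, and does not require first establishing that $z^*$ has no atoms above $r^{2i+2}$), and your closing deduction via preservation of the high-exponent coefficients plus positivity of the atoms is a valid, if somewhat more explicit, version of the paper's implicit conclusion.
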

		
		\begin{proof}
			If $c_k < \mathsf{n}(r)^{s_{k + 1} - s_k}$ for each $k \in \llbracket 0,n \rrbracket$ then we are done by our previous observation. By contradiction, assume that $z^* \neq z'$. Using the transformation
			\begin{equation}\label{eq: transformation}
				\mathsf{n}(r)^{s_{k + 1} - s_k}r^{s_k} = \mathsf{d}(r)^{s_{k + 1} - s_k}r^{s_{k + 1}}
			\end{equation}
			 we can generate from $z^*$ a new factorization $z_1 \in \mathsf{Z}_{M_i}(x_i)$ such that $|z_1| < |z^*|$. Then either $z_1 = z'$ or we can again apply the transformation~\eqref{eq: transformation} to obtain a new factorization $z_2 \in \mathsf{Z}_{M_i}(x_i)$ such that $|z_2| < |z_1|$, and so on. This procedure stops since there is no strictly decreasing sequence of nonnegative integers. Then there exist factorizations $z^* = z_0, z_1, \ldots, z_m = z'$ such that $|z_j| > |z_{j + 1}|$ for every $j \in \llbracket 0, m - 1 \rrbracket$. It should be noted that the transformation~\eqref{eq: transformation} increases the exponent of $r$, which means that $c_k = 0$ for all $s_k > 2i + 2$, where $k \in \llbracket 0,n \rrbracket$.
%			  Given that $r^{2i + 1} \not\in M_i$ and that the transformation~\eqref{eq: transformation} introduces coefficients of the form $\mathsf{d}(r)^{s_{k + 1} - s_k}$, 
%			 the inequality $s_k < 2i + 2$ holds for all $k \in \llbracket 0,n \rrbracket$.
			 This implies that at some point in the aforementioned procedure we applied the transformation $\mathsf{n}(r)^2r^{2i} = \mathsf{d}(r)^2 r^{2i + 2}$, but this contradicts that $|z^*| < |z| = \mathsf{n}(r)^2$. 
		\end{proof}
	\noindent Because of Claim~\ref{claim: element in the delta set}, $\mathsf{n}(r)^2 - \mathsf{d}(r)^2 \in \Delta(M_i)$ for every $i \in \nn$. Consequently, we have that $\mathsf{n}(r)^2 - \mathsf{d}(r)^2 \in \liminf_{i} \Delta(M_i)$. However, we know that $\Delta(S_r) = \{\mathsf{n}(r) - \mathsf{d}(r)\}$ by \cite[Corollary 3.4]{ScGG2019}. Therefore, $\Delta(S_r) \neq \liminf_{i} \Delta(M_i)$.
	\end{example}
	
	Example~\ref{ex: delta set cannot be approximated} is rather complicated, but notice that an approximation $(M_i)_{i \geq 1}$ of a Puiseux monoid $M$ satisfying that $\Delta(M) \neq \liminf_{i} \Delta(M_i)$ does never stabilize, which means that, in particular, $M$ is not finitely generated. On the other hand, rational cyclic monoids are perhaps the non-finitely generated Puiseux monoids with more tractable factorization invariants (see \cite{ScGG2019}).

\section*{Acknowledgments}
	
	 The author wants to thank Felix Gotti not only for providing the initial questions that motivated this paper but also for his guidance during its preparation. While working on this article, the author was supported by the University of Florida Mathematics Department Fellowship.


\begin{thebibliography}{20}

		\bibitem{AAPAGS2016} A. Assi and P.~A.~Garc\'ia-S\'anchez: \emph{Numerical Semigroups and Applications}, Springer-Verlag, New York, 2016.
		
		\bibitem{TBCORP2017} T. Barron, C. O'Neill, and R. Pelayo: \emph{On dynamic algorithms for factorization invariants in numerical monoids}, Math. Comp. \textbf{86} (2017) 2429--2447.
		
		\bibitem{BCKR06} C. Bowles, S. T. Chapman, N. Kaplan, and D. Reiser: \emph{On delta sets of numerical monoids}, J. Algebra Appl. \textbf{5} (2006) 695--718.
		
		\bibitem{CKDH2009} S. T. Chapman, R. Hoyer, and N. Kaplan: \emph{Delta sets of numerical monoids are eventually periodic}, Aequ. Math. \textbf{77} (2009) 273--279.
			
		\bibitem{CGG2019} S. T. Chapman, F. Gotti, and M. Gotti: \emph{When is a Puiseux monoid atomic?}, Amer. Math. Monthly (to appear). [arXiv:1908.09227v2]
		
		\bibitem{ScGG2019} S. T. Chapman, F. Gotti, and M. Gotti: \emph{Factorization invariants of Puiseux monoids generated by geometric sequences}, Comm. Algebra \textbf{48} (2020) 380--396.
		
		\bibitem{ChHoMoo2006} S. T. Chapman, M. T. Holden, and T. A. Moore: \emph{Full elasticity in atomic monoids and integral domains}, Rocky Mountain J. Math. \textbf{36} (2006) 1437--1455.
		
		\bibitem{ChWWS1990} S. T. Chapman and W. W. Smith: \emph{Factorization in Dedekind domains with finite class group}, Israel J. Math. \textbf{71} (1990) 65--95.
		
		\bibitem{MDGSJM} M. Delgado, P. A. Garc\'ia-S\'anchez, and J. Morais: \emph{``NumericalSgps", a GAP package for numerical semigroups}, http://www.gap-system.org.  
		
		\bibitem{GS2016} P. A. Garc\'ia-S\'anchez: \emph{An overview of the computational aspects of nonunique factorization invariants}, in: Multiplicative Ideal Theory and Factorization Theory, Springer, 2016, pp. 159--181.

		\bibitem{GSJCR2009} P.~A.~Garc\'ia-S\'anchez and J.~C.~Rosales: \emph{Numerical Semigroups}, Developments in Mathematics Vol. 20, Springer-Verlag, New York, 2009.

		\bibitem{AG2016} A. Geroldinger: \emph{Sets of Lengths}, Amer. Math. Monthly \textbf{123} (2016) 960--988.
		
		\bibitem{GAP} The GAP Group, GAP - Groups, Algorithms, and Programming, Version 4.7.5 (2014) http://www.gap-system.org.
			
		\bibitem{AGHK06} A. Geroldinger and F. Halter-Koch: \emph{Non-unique Factorizations: Algebraic, Combinatorial and Analytic Theory}, Pure and Applied Mathematics Vol. 278, Chapman \& Hall/CRC, Boca Raton, 2006.

		\bibitem{AGQZ} A. Geroldinger and Q. Zhong: \emph{Factorization theory in commutative monoids}, Semigroup Forum \textbf{100} (2020) 22--51.
		
		\bibitem{GOTTI16} F. Gotti: \emph{On the atomic structure of Puiseux monoids}, J. Algebra Appl. \textbf{16} (2017) 1750126.
		
		\bibitem{GOTTI2019L} F. Gotti: \emph{Systems of sets of lengths of Puiseux monoids}, J. Pure Appl. Algebra \textbf{223} (2019) 1856--1868.
		
		\bibitem{MGOTTI2019} M. Gotti: \emph{On the local k-elasticities of Puiseux monoids}, Internat. J. Algebra Comput. \textbf{29} (2019) 147--158.
	
		\bibitem{GOTTI2018} F. Gotti: \emph{Puiseux monoids and transfer homomorphisms}, J. Algebra \textbf{516} (2018) 95--114.	

		\bibitem{GG2018} F. Gotti and M. Gotti: \emph{Atomicity and boundedness of monotone Puiseux monoids}, Semigroup Forum \textbf{96} (2018) 536--552.
				
	    \bibitem{GoOn2017} F. Gotti and C. O'Neill: \emph{The elasticity of Puiseux monoids}, J. Commut. Algebra (to appear). doi:https://projecteuclid.org/euclid.jca/1523433696
	
		\bibitem{KOP2016} C. Kiers, C. O'Neill, and V. Ponomarenko: \emph{Numerical semigroups on compound sequences}, Comm. Algebra \textbf{44} (2016) 3842--3852.
		
		\bibitem{HP2020} H. Polo: \emph{On the sets of lengths of Puiseux monoids generated by multiple geometric sequences}. [arXiv:2001.06158]
		
		\bibitem{Valenza} R. J. Valenza: \emph{Elasticity of factorization in number fields}, J. Number Theory \textbf{36} (1990) 212--218.
				
	\end{thebibliography}
\end{document}